\numberwithin{equation}{section}
\newtheorem{thm}{Theorem}[section]
\newtheorem{cor}[thm]{Corollary}
\numberwithin{equation}{section}
\begin{document}

%--------------------------------------------------
%%Don not change any thing in this part
\leftline{ \scriptsize}

\vspace{1.3 cm}
%----------------------------------------------------------------------------
\title
{ Symmetry classes of tensors and Semi-direct product of finite abelian groups}
\author{ Kijti Rodtes and  Kunlathida Chimla }
\thanks{{\scriptsize
\hskip -0.4 true cm MSC(2000): Primary   20C15; Secondary 15A69
\newline Keywords: Symmetry classes of tensors, o*-basis, Semi-direct product, Wreath product}}
\hskip -0.4 true cm

\maketitle

%-----------------------------------------------------------------------------

%----------------------------------------------
\begin{abstract} In the study of symmetry classes of tensors, finding examples of symmetry classes of tensors that possess an o*-basis is of considerable interest.  There are only few classes of groups that have been provided a necessary and sufficient condition for having such a basis.  There is no general criterion for any finite groups yet.  In this note, we provide a necessary and sufficient condition for the existence of o*-basis of symmetry classes of tensors associated with semi-direct product of some finite abelian groups and, consequently, their wreath product.
\end{abstract}

\vskip 0.2 true cm

%-----------------------------------------------------------------------------

\pagestyle{myheadings}
\markboth{\rightline {\scriptsize Kijti Rodtes and  Kunlathida Chimla}}
         {\leftline{\scriptsize }}
\bigskip
\bigskip

%-----------------------------------------------------------------------------
%-----------------------------------------------------------------------------

\vskip 0.4 true cm

\section{Introduction}
Many branches of pure and applied mathematics: combinatorial theory, matrix theory, operator theory, group representation theory, differential geometry, partial differential equations, quantum mechanics and other areas motivate the study of symmetry classes of tensors \cite{Chi}.   In particular, finding examples of (higher) symmetry classes of tensors that possesses a special basis (o*-basis) raised by B.Y. Wang and M.P. Gong in \cite{Gong} is of considerable interest.

	Necessary and sufficient conditions for the existence of o*-basis of symmetry classes of tensors are provided in few classes of groups;  for example, dihedral groups, \cite{HT}, dicyclic groups, \cite{DP}, semi-dihedral groups, \cite{HORO},  non abelian groups of order $pq$, \cite{kijtipq},  some subgroups of full symmetric groups and some types of p-groups in \cite{Holmm}. For any finite group, there is a necessary condition provided in \cite{shahryari}.

	It is well known that a finite group $G$ can be decomposed as a semi-direct product $G=A\rtimes_\phi H$ of a group $A$ by $H$  if and only if the short exact sequence $$ \{e\}\longrightarrow A\longrightarrow G \longrightarrow H\longrightarrow \{e\}$$
splits.  Note also that not every short exact sequence splits and then some extension groups of $A$ by $H$ can not always be written as a semi-direct product but it is a subgroup of some wreath product $A\wr_\Omega H$, for some $H$-set $\Omega$ (universal embedding theorem).  Wreath product is defined via semi-direct product and it can be used to construct a lot of interesting groups.  However, many well known groups can be written as a semi-direct product of finite abelian groups; for example, dihedral groups, semi-dihedral groups,  non-abelian groups of order $pq$ and $Z$-groups (group in which its Sylow subgroups are all cyclic and this groups have applications in the classification of finite groups, \cite{Wong}).   In this article, we provide a necessary and sufficient condition for the existence of o*-basis of symmetry classes of tensors associated with semi-direct product of some finite abelian groups and, consequently, their wreath product.
\section{Preliminary}

Let $V$ be an $n$-dimensional complex inner product space with an orthonormal basis $B=\{ e_1,\dots,e_n\}$.  Then, the $m$-folds tensor space $V^{\otimes m}:=V\otimes V \otimes \cdots\otimes V$ has $B^{\otimes m}=\{e^\otimes _\alpha\, |\, \alpha \in \Gamma_{m,n} \} $ as an orthonormal basis (with respect to the induced inner product), where $\Gamma_{m,n}$ is the set of all sequences $\alpha = (\alpha_1,\dots,\alpha_m)$, with $1  \leq \alpha_i\leq n$ and
$$ e^\otimes_\alpha=e_{\alpha_1}\otimes \cdots \otimes e_{\alpha_m}.$$
Let $G$ be a subgroup of $S_m$.  Define the action of $G$ on $\Gamma_{m,n}$ by
$$
\alpha\sigma =(\alpha_{\sigma^{-1}(1)},\dots, \alpha_{\sigma^{-1}(m)} ).
$$
The space $V^{\otimes m}$ is a left $\mathbb{C}G$-module with the action given by $\sigma e^{\otimes}=e^\otimes_{\alpha \sigma^{-1}}$ for each $\sigma \in G, \alpha \in \Gamma_{m,n}$, extended linearly.  Under this action, the induced inner product on $V^{\otimes m}$ is $G$-invariant.  Let $O(\alpha)=\{\alpha\sigma\,|\,\sigma\in G \}$ be the orbit of $\alpha$. We write $\alpha \sim \beta $ if $\alpha$ and $\beta$  belong to the same orbit in $\Gamma_{m,n}$. Let $ \Delta$ be a
system of distinct representatives of the orbits.  We denote by $G_\alpha$ the stabilizer subgroup of $\alpha$, i.e., $G_\alpha=\{\sigma\in G\,|\,\alpha\sigma=\alpha \}$.

For any $\sigma \in G$, define the operator $
P_\sigma:  V^{\otimes m} \longrightarrow  V^{\otimes m}
$ on the m-folds tensor space by
\begin{equation*}
  \label{e10}
 P_\sigma (v_1\otimes \cdots \otimes v_m )= v_{\sigma^{-1}(1)}\otimes \cdots\otimes v_{\sigma^{-1}(m)} .
\end{equation*}
Let $\operatorname{Irr}(G)$ be the set of all irreducible characters of $G$.  For each $\chi\in \operatorname{Irr}(G)$,
the symmetry classes of tensors associated with $G$ and $\chi$ is the image
of the symmetry operator
\begin{equation*}
\label{e11}
T(G,\chi)= \frac{\chi(e)}{|G|}\sum_{\sigma \in G}\chi(\sigma)P_\sigma,
\end{equation*}
and it is denoted by $V_{\chi}(G)$. We say that the tensor $T(G,\chi)(v_1\otimes\dots\otimes v_m )$ is a decomposable symmetrized tensor, and we denote it by $v_1\ast\dots\ast v_m$. The dimension of  $V_{\chi}(G)$ is given by
\begin{equation*}
\label{important1}
\dim (V_{\chi}(G))= \frac{\chi(e)}{|G|}\sum_{\sigma \in G}\chi(\sigma)n^{c(\sigma)},
\end{equation*}
where $c(\sigma)$ is the number of cycles, including cycles of length one, in the disjoint cycle factorization of $\sigma$; see  \cite {Me}.

The inner product on $V$ induces an $G$-invariant inner product on $V^{\otimes m}$ and hence on $V_\chi (G)$ which satisfies
$$
\langle v_1\ast\dots\ast v_m,u_1\ast\dots\ast u_m  \rangle= \frac{\chi(e)}{|G|}\sum_{\sigma \in G}\chi(\sigma)\prod_{i=1}^{m}\langle v_{i},u_{\sigma(i)}\rangle =\frac{\chi(e)}{|G|}d^G_\chi(A),
$$
where $A_{ij}=\langle v_i,u_j\rangle$ and $d^G_\chi$ is the generalized matrix function defined by $d^G_\chi(A):=\sum_{\sigma \in G}\chi(\sigma)\prod_{i=1}^{m}A_{i\sigma(i)}$.  If $v_i$'s and $u_j$'s are orthonormal vectors, then the formula becomes
\begin{equation}\label{sepprop}
 \langle e^{*}_\alpha,e^{*}_\beta \rangle =\left\{
                                             \begin{array}{ll}
                                               0 & \hbox{ if $\alpha \nsim \beta$,} \\
                                               \frac{\chi(1)}{|G|}\sum_{\sigma \in G_\beta}\chi(\sigma h^{-1}) & \hbox{ if $\alpha=\beta h$.}
                                             \end{array}
                                           \right.
\end{equation}
In particular, for $\sigma_1, \sigma_2 \in G $ and 	$\alpha \in \Gamma_{m,n}$, we have
\begin{equation}
\label{important2}
\langle e^{*}_{\alpha},e^{*}_{\alpha\sigma}\rangle = \frac{\chi(e)}{|G|}\sum_{h \in  G_\alpha }\chi(\sigma h).
\end{equation}
Since $\sigma e^*_\alpha = e^*_{\alpha\sigma^{-1}}$ for each $\sigma$ \cite[Lemma 1.3]{HK} and the induced inner product is $G$ invariant,
\begin{equation}\label{innnerinv1}
    \langle e^{*}_{\alpha\sigma_1},e^{*}_{\alpha\sigma_2}\rangle=\langle e^{*}_{\alpha},e^{*}_{\alpha\sigma_2\sigma_1^{-1}}\rangle,
\end{equation}
for each $\sigma_1,\sigma_2$ in $G$ and $\alpha\in \Gamma_{m,n}$.  Moreover, by (\ref{important2}),  $$ \parallel  e^*_\alpha\parallel^2 =\frac{\chi(e)}{|G|}  \sum_{h \in G_\alpha}\chi(h). $$
By setting
$$
\overline{\Delta} = \{\alpha \in \Delta \,|\, \sum_{h \in G_\alpha}\chi(h) \neq 0   \},
$$
we have  $e^{*}_{\alpha}\neq 0$ if and only if $\alpha \in \overline{\Delta}$.\\

For $\alpha \in \overline{\Delta} $, $ V^{*}_\alpha: =\langle e^{*}_{\alpha\sigma}\,|\, \sigma \in G  \rangle $ is called the orbital subspace of $ V_\chi(G).$ By (\ref{sepprop}), it follows that
$$
V_\chi(G)= \bigoplus_{\alpha \in \overline{\Delta}} V^{*}_\alpha
$$
is an orthogonal direct sum. In \cite{F2}, it is proved that
\begin{equation}
\label{important3}
\dim (V^{*}_\alpha) ~=~ \frac{\chi(e)}{|G_\alpha|}\sum_{h \in G_\alpha}\chi(h).
\end{equation}
Thus, if $\chi$ is linear, then $|\chi(h)|\leq 1$, for each $h\in G_\alpha$ and thus $\dim V^{*}_\alpha \leq \chi(e)=1$ and in this case, the set
$
\{e^{*}_{\alpha} \,|\, \alpha \in \overline{\Delta}   \}
$
is an orthogonal basis of $V_\chi(G)$. An orthogonal basis which consists of the decomposable symmetrized tensors $e^{*}_{\alpha}$ is called an \emph{orthogonal $\ast$-basis} or \emph{o*-basis} for short. If $\chi$ is not linear, it is possible that $V_\chi(G)$  has no o*-basis.   More details and motivation for the study of symmetry classes of tensors can be found in  \cite{Me}.

To prove the main result, we also need some facts about vanishing sum of roots of unity.  For a given natural number $m=p_1^{a_1}p_2^{a_2}\cdots p_t^{a_t}$, suppose that $\epsilon_1,\epsilon_2,...,\epsilon_k \in \mathbb{C}$ are $m$th roots of unity.  The main result of T.Y. Lam and K. H. Leung in \cite{Lam and Leung}, asserts that
\begin{equation}\label{Lamandleung}
   \hbox{\emph{ if  $k\notin \mathbb{N}_0\langle\{p_1,\dots,p_t \}\rangle$, then $\sum_{i=1}^k\epsilon_i\neq 0$},}
\end{equation}
where $ \mathbb{N}_0\langle\{p_1,\dots,p_t \}\rangle=\{ k_1p_1+\cdots+k_tp_t\,|\,  k_i\in\mathbb{N}_0 , \hbox{ for each $1\leq i \leq t$}\}$ and $\mathbb{N}_0$ is the set of nonnegative integers.

\section{Semi-direct product of finite abelian groups and Characters}
Let $A$, $H$  be finite groups with identity $e_A$, $e_H$, respectively.  Let $\phi: H\longrightarrow \operatorname{Aut}(A)$ be a group homomorphism and we denote $\phi(h)$ by $\phi_h$, for each $h\in H$.  Let $G=A\rtimes _\phi H$.  As a set, $G=\{ (a,h)\, |\, a\in A, h\in H\}$.  Operation on the group $G$ is given by $$(a_1,h_1) (a_2,h_2)=(a_1\phi_{h_1}(a_2), h_1h_2),$$ for each $(a_1,h_1)$ and $(a_2,h_2)$ in $G$.  The identity of the group $G$ is $(e_A,e_H)$ and the inverse of $(a,h)\in G$ is $(\phi_{h^{-1}}(a^{-1}), h^{-1})$.
Note that $\{(a,e_H)\, |\, a\in A  \}\cong A$ is a normal subgroup of $G$ and $\{(e_A,h)\, |\, h\in H  \} \cong H$ is a subgroup of $G$.  It is also well known that $G$ is a semi-direct product of $A$ by $H$ if and only if the short exact sequence $ \{e\}\longrightarrow A\longrightarrow G \longrightarrow H\longrightarrow \{e\} $ splits.  Moreover, by Schur-Zassenhaus theorem, if $G$ is a finite group and $N$ is a normal subgroup whose order is coprime to the order of the quotient $G/N$, then the short exact sequence $$ \{e\}\longrightarrow N\longrightarrow G \longrightarrow G/N\longrightarrow \{e\} $$ splits, i.e., $G$ is a semi-direct product of $N$ by $G/N$.\\

Let $\Omega$ be an $H$-set (with the action $\phi$) and let $K$ be the direct product $K=\prod_{\omega \in \Omega} A_\omega$ of copies of $A_\omega=A$ by the set $\Omega$.  The elements of K can be seen as sequences $(a_\omega)$ of elements of A indexed by $\Omega$ with componentwise multiplication. Then the action $\phi$ of $H$ on $\Omega$ extends in a natural way to an action $\phi_\Omega$ of $H$ on the group $K$ by $\phi_\Omega(h) (a_\omega) := (a_{h^{-1}\omega})$.   Then the wreath product of $A$ by $H$ indexed by $\Omega$ is defined by $K\rtimes _{\phi_\Omega} H$ and it is denoted by $$A \wr_\Omega H.$$  By the universal embedding theorem (known as the Krasner-Kaloujnine embedding theorem), if $G$ is an extension of $A$ by $H$, then there exists a subgroup of the wreath product $A \wr_\Omega H$ (for some $\Omega$) which is isomorphic to $G$.\\

 Now, suppose that $A$ is an abelian groups.  Then,  the set of all irreducible characters (all are linear) $\operatorname{Irr}(A):=A^\vee=\operatorname{Hom} (A,\mathbb{C})$ is isomorphic to $A$.  Thus, the action of $H$ on $A$ induces the action on $A^\vee$.  Precisely, for each $x\in A^\vee$ and $h\in H$, $h\cdot x:=\phi_h(x)=x\circ \phi_h$.  Let $[x]$ be the orbital and $H_x$ be the stabilizer of $x$ under this action.    Let $U$ be an irreducible representation of $H_x$.  We define a representation $V_{([x], U)}$ of $G=A\rtimes _\phi H$ as follows.  As a representation of $H$, we set
 $$ V_{([x],U)}:=\operatorname{Ind}_{H_x}^H (U)=\{ f:H \longrightarrow U \,|\, f(\sigma h)=\sigma f(h), \sigma \in H_x, h\in H\}.$$
 Next, we introduce an additional action of $A$ on $V_{([x],U)}$ by $(af)(h):=(x\circ \phi_h)(a) f(h)$.  The combination of these two actions becomes an action of $G=A\rtimes _\phi H$.  \\

  It can be shown that (see Theorem 4.75, \cite{POSTADE}, for example) this representation is independent from the choice of $x\in[x]$.  Also, if $\{ [x_1],\ldots, [x_k]\}$ is the set of all disjoint orbits for the action on $H$ on $A^\vee$, then  $$\{  V_{([x_i],U)} \,|\, U\in \operatorname{Irr}(H_{x_i}) ,i=1,2,...,k\}, $$ forms a complete set of irreducible representations of $G=A\rtimes_\phi H$.  The character $\chi_V$ of $V:=V_{([x],U)}$, is given by the Mackey-type formula: for each $a\in A$ and $g\in H$,
 \begin{equation}\label{Mackey formula}
    \chi_V(a,g)=\frac{1}{|H_x|} \sum_{h\in H : hgh^{-1}\in H_x}x(\phi_h(a))\chi_U(hgh^{-1}).
  \end{equation}
In particular, if both $A$ and $H$ are abelian, then formula (\ref{Mackey formula}) reduces to
\begin{equation}\label{Mackey formula2}
    \chi_V(a,g)=\left\{
                  \begin{array}{ll}
                     \frac{\chi_U(g)}{|H_x|} \sum_{h\in H}x(\phi_h(a)) & \hbox{if $g\in H_x$,} \\
                    0 & \hbox{if $g\notin H_x$.}
                  \end{array}
                \right.
  \end{equation}
\section{Results}
Let $G=A\rtimes _\phi H$, where both  $A,H$ are finite abelian groups, and  $\chi$ be an irreducible character of $G$.  Let $V$ be an inner product space with $\dim V=n>1$ and $V_\chi(G)$ be a symmetry class of tensors in the $m$-folds tensor space $V^{\otimes m}$. In the the following results, we denote by $\operatorname{Prime}(|A|)$ the set of all prime factors of $|A|$.
\begin{thm} \label{mainthm} If there is $\alpha \in \Gamma_{m,n}$ such that $G_\alpha=\{e\}$ and $|H|\notin \mathbb{N}_0\langle \operatorname{Prime}(|A|)\rangle$, then $V_\chi(G)$ admits o*-basis if and only if $\chi$ is linear.
\end{thm}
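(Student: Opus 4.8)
The plan is to prove the two implications separately. One direction — $\chi$ linear $\Rightarrow$ $V_\chi(G)$ has an o*-basis — requires neither hypothesis: as recalled above, a linear $\chi$ makes $\{e^{*}_\alpha : \alpha\in\overline{\Delta}\}$ an o*-basis automatically. So the whole argument is for the converse, which I would establish contrapositively: assuming $\chi$ is \emph{not} linear and the two hypotheses hold, I would show $V_\chi(G)$ admits no o*-basis.

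For the set-up, I would first replace $\alpha$ by the representative of its orbit in $\Delta$; since $G_\alpha=\{e\}$ we get $\parallel e^{*}_\alpha\parallel^2=\chi(e)^2/|G|\neq 0$, so $\alpha\in\overline{\Delta}$, and likewise $G_{\alpha\sigma}=\{e\}$ and $e^{*}_{\alpha\sigma}\neq 0$ for every $\sigma\in G$, with $\langle e^{*}_{\alpha\sigma},e^{*}_{\alpha\tau}\rangle=\frac{\chi(e)}{|G|}\chi(\tau\sigma^{-1})$ by (\ref{important2}) and (\ref{innnerinv1}). By the classification in Section~3, $\chi=\chi_{V_{([x],U)}}$ with $U$ a (linear) character of the abelian group $H_x$, so $\chi(e)=[H:H_x]$, and "$\chi$ not linear" means precisely $[H:H_x]\geq 2$. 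Since $V_\chi(G)=\bigoplus_{\gamma\in\overline{\Delta}}V^{*}_\gamma$ is an orthogonal direct sum of orbital subspaces and each basis vector $e^{*}_\beta$ lies in a single $V^{*}_\gamma$, any o*-basis restricts to an orthogonal basis of $V^{*}_\alpha$ made of vectors $e^{*}_{\alpha\sigma}$; and $\dim V^{*}_\alpha=\chi(e)^2$ by (\ref{important3}). Thus an o*-basis would supply a set $S\subseteq G$ with $|S|=\chi(e)^2$ and $\chi(\sigma\tau^{-1})=0$ for all distinct $\sigma,\tau\in S$, and I would reach a contradiction by proving $|S|\leq\chi(e)$.

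The hard part will be exactly this bound on $|S|$. I would partition $S$ according to the $H_x$-coset containing the $H$-component of each element: $S=\bigsqcup_{c\in H/H_x}S_c$. If $\sigma=(a,g)$ and $\tau=(a',g')$ lie in one block $S_c$, then $gg'^{-1}\in H_x$, and multiplying out in $A\rtimes_\phi H$ gives $\sigma\tau^{-1}=\bigl(a\,\phi_{gg'^{-1}}(a'^{-1}),\,gg'^{-1}\bigr)$. Since $gg'^{-1}\in H_x$ and $|\chi_U|\equiv 1$, formula (\ref{Mackey formula2}) forces $\sum_{h\in H}x(\phi_h(b))=0$ with $b=a\,\phi_{gg'^{-1}}(a'^{-1})$; by orbit--stabilizer this sum equals $|H_x|\sum_{y\in[x]}y(b)$, i.e. a vanishing sum of exactly $|[x]|=[H:H_x]=\chi(e)$ roots of unity, each an $|A|$-th root of unity. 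Now $\chi(e)$ divides $|H|$, and a positive divisor of an integer not lying in $\mathbb{N}_0\langle\operatorname{Prime}(|A|)\rangle$ is again not in $\mathbb{N}_0\langle\operatorname{Prime}(|A|)\rangle$; hence $\chi(e)\notin\mathbb{N}_0\langle\operatorname{Prime}(|A|)\rangle$, and (\ref{Lamandleung}) forbids the vanishing. So each block has at most one element, $|S|\leq|H/H_x|=\chi(e)<\chi(e)^2$, a contradiction. Therefore $\chi$ must be linear.

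The remaining points are routine and I would fill them in without trouble: the inverse/product arithmetic in the semidirect product; the orbit--stabilizer identity turning the multiset $\{x\circ\phi_h : h\in H\}$ into $[x]$ counted $|H_x|$ times; the elementary divisibility remark about $\mathbb{N}_0\langle\cdot\rangle$; and the degenerate cases $|A|=1$ or $|H|=1$, where $G$ is abelian, every $\chi$ is linear, and the statement holds vacuously. The single delicate point is arranging the Mackey computation so that Lam--Leung is applied to a sum of \emph{precisely} $\chi(e)$ roots of unity, which is exactly where the hypothesis $|H|\notin\mathbb{N}_0\langle\operatorname{Prime}(|A|)\rangle$ enters.
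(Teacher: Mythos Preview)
Your proposal is correct and follows essentially the same route as the paper's proof: restrict to the orbital subspace $V^*_\alpha$, partition a putative o*-basis by the $H_x$-coset of the $H$-component, use Lam--Leung to show that two basis elements in the same coset cannot be orthogonal, and count cosets to force $\chi(e)^2\le [H:H_x]=\chi(e)$. One simplification worth noting: the paper applies (\ref{Lamandleung}) directly to the $|H|$-term sum $\sum_{h\in H}x(\phi_h(b))$ using the hypothesis $|H|\notin\mathbb{N}_0\langle\operatorname{Prime}(|A|)\rangle$ as given, so your orbit--stabilizer reduction to a $\chi(e)$-term sum and the accompanying divisibility lemma---what you call ``the single delicate point''---are not actually needed.
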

\begin{proof} We write $\chi$ for $\chi_{V_{([x], U)}}$, where $U\in \operatorname{Irr}(H_x)$.  If $\chi$ is linear then, by discussion in Section 2,  $V_\chi(G)$ admits o*-basis.  On the other hand, assume that there is an o*-basis for $V_\chi(G)$.  Then, $V^*_\alpha(G)$ has an o*-basis; say, $B=\{ e^*_{\alpha(a_1,h_1)},\dots,e^*_{\alpha(a_{s_\alpha},h_{s_\alpha})}\}$ for some $(a_i,h_i)\in G$, where ${s_\alpha}=\dim V^*_\alpha(G)$.
 %By Proposition \ref{changingbasis}, $$\widetilde{B}:=(a_1,h_1)B=\{ e^*_\alpha,e^*_{\alpha(\widetilde{a}_1,\widetilde{h}_1)},\dots,e^*_{\alpha(\widetilde{a}_t,\widetilde{h}_t)} \},$$ is also an o*-basis of $V^*_\alpha(G)$, where $(\widetilde{a}_i,\widetilde{h}_i)=(a_i,h_i)(a_1,b_1)^{-1}$ and $t=s_\alpha-1$.

  Since $H$ is abelian, $U$ is linear.  Then, by (\ref{Mackey formula2}), $G_\alpha=\{e:=(e_A,e_H)\}$, and by Freese's theorem, we calculate that
  \begin{equation}\label{salpha}
s_\alpha=\frac{\chi(e_A,e_H)}{|G_\alpha|}\sum_{(a,h)\in G_\alpha}\chi(a,h)=\left( \frac{\chi_U(e_H)}{|H_x|} \sum_{h\in H}x(\phi_h(e_A)) \right)^2=(\frac{|H|}{|H_x|})^2.
  \end{equation}
  Moreover, in (\ref{Mackey formula2}), $\sum_{h\in H}x(\phi_h(a))$ is an $|H|$-terms sum of $|A|$th-roots of unity.  Since $|H|\notin \mathbb{N}_0\langle \operatorname{Prime}(|A|)\rangle$, by (\ref{Lamandleung}) and (\ref{Mackey formula2}),  the sum can not be zero.  We now conclude that, for each $(a,h)\in G$,
\begin{equation}\label{cricha1}
\chi(a,h)= 0 \hbox{ if and only if } h\notin H_x.
\end{equation}

We claim that $h_iH_x\cap h_j H_x=\emptyset$ if $i\neq j$.   Suppose $h_i=\sigma h_j$, for some $\sigma\in H_x$.   Since $e^*_{\alpha(a_i,h_i)}$ and $e^*_{\alpha(a_j,h_j)}$ are orthogonal and the inner product is $G$ invariant,
 \begin{eqnarray*}
 % \nonumber to remove numbering (before each equation)
  0&=& \left<e^*_{\alpha(a_i,h_i)}, e^*_{\alpha(a_j,h_j)}\right>  \\
  &=& \left<e^*_{\alpha}, e^*_{\alpha(a_j,h_j)(a_i,\sigma h_j)^{-1}}\right> \\
  &=& \left<e^*_{\alpha}, e^*_{\alpha(a_j,h_j)(\phi_{(\sigma h_j)^{-1}}(a_i^{-1}),(\sigma h_j)^{-1})}\right> \\
  &=& \left<e^*_{\alpha}, e^*_{\alpha(a_j\phi_{\sigma^{-1}}(a_i^{-1}),\sigma^{-1})}\right>\\
  &=&  \frac{\chi(e)}{|G|}\chi(a_j\phi_{\sigma^{-1}}(a_i^{-1}),\sigma^{-1})\neq 0,
 \end{eqnarray*}
 by (\ref{cricha1}), which is a contradiction and then the claim becomes true.  Therefore, $$|H|\geq s_\alpha |H_x|.$$
 By (\ref{salpha}), we conclude that $|H_x|=|H|$.  This implies that $$\deg(\chi)=\chi(e_A,e_H)=|H|/|H_x|=1$$ and hence $\chi$ is linear.
\end{proof}
Let $\Omega$ be a finite $H$-set.  Then $\operatorname{Prime}(|A|^{|\Omega|})=\operatorname{Prime}(|A|)$.  Also, if $A$ is abelian, then so is $K=\prod_{\omega \in \Omega} A_\omega$, where $A_\omega=A$, for each $\omega \in \Omega$.  Hence, the following result is an immediate fact from Theorem \ref{mainthm}.
\begin{cor}
Let $G=A \wr_\Omega H$, where $A, H$ are finite abelian groups.  If there is $\alpha \in \Gamma_{m,n}$ such that $G_\alpha=\{e\}$ and $|H|\notin \mathbb{N}_0\langle \operatorname{Prime}(|A|)\rangle$, then $V_\chi(G)$ admits o*-basis if and only if $\chi$ is linear.
\end{cor}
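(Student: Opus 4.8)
The plan is to recognize the wreath product as a semi-direct product of two finite abelian groups and then invoke Theorem~\ref{mainthm} directly. By definition, $A\wr_\Omega H=K\rtimes_{\phi_\Omega}H$, where $K=\prod_{\omega\in\Omega}A_\omega$ with each $A_\omega=A$ and $\phi_\Omega$ is the action of $H$ on $K$ induced by permuting coordinates according to the $H$-set $\Omega$. Since $\Omega$ is finite and $A$ is a finite abelian group, $K$ is again a finite abelian group (it is a finite direct product of finite abelian groups), and $H$ is finite abelian by hypothesis. Hence $G=K\rtimes_{\phi_\Omega}H$ is precisely of the form treated in Theorem~\ref{mainthm}, with $K$ playing the role of the normal abelian factor $A$ there.

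It then remains only to transport the two numerical hypotheses to this presentation. The stabilizer hypothesis is unchanged: by assumption there is $\alpha\in\Gamma_{m,n}$ with $G_\alpha=\{e\}$. For the arithmetic condition, $|K|=|A|^{|\Omega|}$, and a prime divides $|A|^{|\Omega|}$ if and only if it divides $|A|$, so $\operatorname{Prime}(|K|)=\operatorname{Prime}(|A|)$ and therefore $\mathbb{N}_0\langle\operatorname{Prime}(|K|)\rangle=\mathbb{N}_0\langle\operatorname{Prime}(|A|)\rangle$; consequently $|H|\notin\mathbb{N}_0\langle\operatorname{Prime}(|A|)\rangle$ is the same statement as $|H|\notin\mathbb{N}_0\langle\operatorname{Prime}(|K|)\rangle$. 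Applying Theorem~\ref{mainthm} to $G=K\rtimes_{\phi_\Omega}H$ yields at once that $V_\chi(G)$ admits an o*-basis if and only if $\chi$ is linear. I do not anticipate any genuine obstacle here: the entire content is the identification of $A\wr_\Omega H$ with a semi-direct product of abelian groups and the elementary equality $\operatorname{Prime}(|A|^{|\Omega|})=\operatorname{Prime}(|A|)$, both already recorded in the paragraph preceding the statement, so the corollary is immediate.
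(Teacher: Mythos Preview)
Your proposal is correct and follows exactly the paper's own argument: identify $A\wr_\Omega H$ as $K\rtimes_{\phi_\Omega}H$ with $K$ finite abelian, observe $\operatorname{Prime}(|K|)=\operatorname{Prime}(|A|)$, and invoke Theorem~\ref{mainthm}. The paper records precisely these two observations in the paragraph preceding the corollary and declares the result immediate, which is just what you do.
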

Condition for having an o*-basis of symmetry classes of tensors associated to the following type of groups can be obtained from the above theorem as well.
\begin{cor}
Let $V$ be an inner product space with $\dim V=n>1$.  The symmetry classe of tensors $V_\chi(G)$, associated to the groups $G$ below, admits o*-basis if and only if $\chi$ is linear.
\begin{enumerate}
  \item Dihedral group $D_{2s}$, where $s$ is odd.
  \item Non-abelian groups of order $pq$.
  \item $Z$-group $G$ that its order has two prime factors and $G_\alpha=\{e\}$ for some $\alpha\in \Gamma_{m,n}$.
\end{enumerate}
\end{cor}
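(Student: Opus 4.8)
The plan is to deduce all three assertions directly from Theorem~\ref{mainthm}. For each listed group $G$ I would (i) exhibit a decomposition $G\cong A\rtimes_\phi H$ with $A,H$ finite abelian, (ii) verify the arithmetic condition $|H|\notin\mathbb{N}_0\langle\operatorname{Prime}(|A|)\rangle$, and (iii) produce an $\alpha\in\Gamma_{m,n}$ with $G_\alpha=\{e\}$; the conclusion is then immediate. For (iii) a uniform device is available: since $n>1$, realize $G$ inside $S_{|G|}$ via its regular representation and let $\alpha$ take the value $2$ in a single coordinate and $1$ in all others. Because the regular action is free, the only permutation fixing that coordinate, equivalently fixing $\alpha$, is the identity, so $G_\alpha=\{e\}$. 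In item (3) such an $\alpha$ is assumed outright, and for (1)--(2) one may instead quote the free orbits already used in \cite{HT} and \cite{kijtipq}.

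For (1), write $D_{2s}=\langle r\rangle\rtimes_\phi\langle f\rangle\cong\mathbb{Z}_s\rtimes_\phi\mathbb{Z}_2$ with $\phi_f$ the inversion automorphism, so $A=\mathbb{Z}_s$ and $H=\mathbb{Z}_2$ are abelian and $\operatorname{Prime}(|A|)=\operatorname{Prime}(s)$. Since $s$ is odd, every prime dividing $s$ is at least $3$, hence every element of $\mathbb{N}_0\langle\operatorname{Prime}(s)\rangle$ is either $0$ or $\ge 3$; in particular $|H|=2\notin\mathbb{N}_0\langle\operatorname{Prime}(|A|)\rangle$, and Theorem~\ref{mainthm} applies (note $D_{2s}$ has genuinely nonlinear irreducibles when $s\ge 3$, so the statement is not vacuous). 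For (2), a non-abelian group of order $pq$ with $p<q$ is $\mathbb{Z}_q\rtimes_\phi\mathbb{Z}_p$ with $\phi$ injective; here $A=\mathbb{Z}_q$, $H=\mathbb{Z}_p$, $\operatorname{Prime}(|A|)=\{q\}$ and $\mathbb{N}_0\langle\{q\}\rangle=\{0,q,2q,\dots\}$. As $0<p<q$ we get $|H|=p\notin\mathbb{N}_0\langle\operatorname{Prime}(|A|)\rangle$, and Theorem~\ref{mainthm} again applies.

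For (3), let $G$ be a $Z$-group whose order has exactly the two prime factors $p\ne q$; if $G$ is abelian the claim is trivial, so assume it is not. By the classical structure theory of $Z$-groups, $A:=[G,G]$ is cyclic, $G/A$ is cyclic, and $\gcd(|A|,|G/A|)=1$; hence, by Schur--Zassenhaus, $G\cong A\rtimes_\phi H$ with $H\cong G/A$, and both $A$ and $H$ are abelian. Coprimality of $|A|$ and $|H|$, together with the fact that only $p$ and $q$ occur, forces $\operatorname{Prime}(|A|)$ and $\operatorname{Prime}(|H|)$ to partition $\{p,q\}$; since $G$ is non-abelian, $|A|>1$ and $|H|>1$, so after renaming we may take $|A|=p^a$ and $|H|=q^b$ with $a,b\ge 1$. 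Then $\operatorname{Prime}(|A|)=\{p\}$ and $p\nmid q^b=|H|$, so $|H|\notin\mathbb{N}_0\langle\operatorname{Prime}(|A|)\rangle$; with the $\alpha$ supplied by the hypothesis, Theorem~\ref{mainthm} yields the claim.

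I expect the only genuinely delicate point to be step (iii): the existence of a free orbit constrains the ambient $m$ and $n$ (for example, the natural degree-$s$ action of $D_{2s}$ on an $s$-gon has no free orbit when only two colours are available and $s$ is odd), so the cleanest route is to fix the representation to be regular, or for (1)--(2) to cite the free orbits already present in the literature, while for (3) it is simply part of the hypothesis. The arithmetic verifications are routine once \eqref{Lamandleung} is in hand: in each case $|H|$ is either $2$, a prime smaller than the unique prime of $|A|$, or a power of a prime not dividing $|A|$, none of which can be written as a nonnegative integer combination of the primes dividing $|A|$.
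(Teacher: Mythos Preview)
Your proposal is correct and follows essentially the same approach as the paper: in each case you exhibit the semi-direct product decomposition, verify the arithmetic condition $|H|\notin\mathbb{N}_0\langle\operatorname{Prime}(|A|)\rangle$, and supply an $\alpha$ with trivial stabilizer, then invoke Theorem~\ref{mainthm}. Your handling of the free-orbit condition via the regular representation is in fact more careful than the paper's, which simply cites the sequences $\alpha=(1,2,1,\dots,1)$ from \cite{HT} and \cite{kijtipq} and, for (3), the structure result from \cite{Wong}.
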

\begin{proof}
(1)  The dihedral group $D_{2s}$ is a semi-direct product  of $C_s=\langle a\rangle$ by $C_2=\langle b\rangle$, i.e., $D_{2s}=C_s\rtimes_\phi C_2$, where $\phi:C_2\longrightarrow \operatorname{Aut}(C_s)$ is defined by $\phi_b(a)=a^{-1}$.  Note that $|C_2|=2\notin \mathbb{N}_0\langle\operatorname{Prime}(|C_s|)\rangle$ (because $s$ is odd), and $G_\alpha=\{e\}$, where $\alpha=(1,2,1,...1)\in \Gamma_{s,n}$ (cf. \cite{HT}).  By Theorem \ref{mainthm}, the conclusion is immediate.  \\

(2) Each non-abelian group $G$ of order $pq$, where $q$  is prime and $p$ divides $q-1$, is a semi-direct product of a cyclic group $C_q=\langle a\rangle$ by a cyclic group $C_p=\langle b\rangle$, i.e., $G=C_q\rtimes _\phi C_p$ such that $\phi:C_q \longrightarrow \operatorname{Aut}(C_q)$ is given by $\phi_b(a)=a^r$, where $r$ is a primitive root of the congruence $z^p\equiv 1 (\operatorname{mod} q)$, \cite{SKBer}.  It is clear that $|C_p|=p\notin \mathbb{N}_0\langle\operatorname{Prime}(|C_q|)\rangle$ and $G_\alpha=\{e\}$, where $\alpha=(1,2,1,\dots,1))\in \Gamma_{q,n}$, \cite{kijtipq}.   Now, the result is immediate by Theorem \ref{mainthm}. \\

(3)  Note that $Z$-group $G$ is an extension group of a split short exact sequence of a cyclic group $C_s$ by a cyclic group $C_t$, whose orders are relatively primes, \cite{Wong}.  So, it is a semi-direct product $G=C_s\rtimes_\phi C_t$, for some $\phi\in \operatorname{Hom}(C_t, \operatorname{Aut}(C_s))$.  By the assumptions, all requirements of the Theorem \ref{mainthm} are satisfied and thus the result follows.

\end{proof}
In the following results, let $G$ be any finite group.  Let $\chi$ be an irreducible character of $G$, $V_\chi(G)\leq V^{\otimes m}$, and denote $Z_\chi:=\{g\in G\, |\, \chi(g)=0\}$. Using the same idea as in the proof of Theorem \ref{mainthm}, we have:
\begin{thm}  Suppose that there is $\alpha \in \Gamma_{m,n}$ such that $G_\alpha=\{e\}$. If there exists a subgroup $H$ of $G$ such that $H\subseteq G\setminus Z_\chi$ and $[G:H]<\chi(e)^2$, then $V_\chi(G)$ does not admit an o*-basis.
\end{thm}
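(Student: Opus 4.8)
The plan is to reverse the mechanism of the proof of Theorem~\ref{mainthm}, substituting the hypothesis $H\cap Z_\chi=\emptyset$ for the vanishing-sum-of-roots-of-unity input. Assume, for contradiction, that $V_\chi(G)$ possesses an o*-basis. Since $V_\chi(G)=\bigoplus_{\beta\in\overline{\Delta}}V^*_\beta$ is an orthogonal direct sum and every decomposable symmetrized basis vector $e^*_\gamma$ lies in the single orbital subspace indexed by the representative of $O(\gamma)$, the basis vectors that fall in $V^*_\alpha$ must form an o*-basis of $V^*_\alpha$; I would record it as $\{e^*_{\alpha g_1},\dots,e^*_{\alpha g_{s_\alpha}}\}$ with $g_i\in G$ and $s_\alpha=\dim V^*_\alpha$. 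Because $G_\alpha=\{e\}$, formula~(\ref{important3}) reduces to $s_\alpha=\chi(e)\cdot\chi(e)=\chi(e)^2$, and $e^*_\alpha\neq 0$ since $V^*_\alpha$ is a genuine summand, so all the listed vectors are nonzero.

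Next I would evaluate the Gram entries of this basis. By the $G$-invariance identity~(\ref{innnerinv1}) and then~(\ref{important2}), with $G_\alpha=\{e\}$ collapsing the sum to its single term, I obtain, for all $i,j$,
\begin{equation*}
\langle e^*_{\alpha g_i},e^*_{\alpha g_j}\rangle=\langle e^*_\alpha,e^*_{\alpha g_j g_i^{-1}}\rangle=\frac{\chi(e)}{|G|}\,\chi(g_j g_i^{-1}).
\end{equation*}
Orthogonality of the $e^*_{\alpha g_i}$ then forces $\chi(g_j g_i^{-1})=0$, that is $g_j g_i^{-1}\in Z_\chi$, whenever $i\neq j$. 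As $H\cap Z_\chi=\emptyset$, this yields $g_j g_i^{-1}\notin H$ for $i\neq j$, so the right cosets $Hg_1,\dots,Hg_{s_\alpha}$ are pairwise distinct, whence $s_\alpha\le [G:H]$. Combining, $\chi(e)^2=s_\alpha\le [G:H]<\chi(e)^2$, a contradiction, so $V_\chi(G)$ admits no o*-basis.

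I do not anticipate a genuine obstacle. The two points that need care are purely organizational: first, that restricting an o*-basis of $V_\chi(G)$ to the orthogonal summand $V^*_\alpha$ again gives an o*-basis, so that its cardinality is exactly $\dim V^*_\alpha=\chi(e)^2$; and second, the translation of the orthogonality relations $e^*_{\alpha g_i}\perp e^*_{\alpha g_j}$ into the coset statement $g_j g_i^{-1}\notin H$ through $H\subseteq G\setminus Z_\chi$, which is exactly the role played by $H_x$ and the criterion~(\ref{cricha1}) in Theorem~\ref{mainthm}. Here the counting is even more direct than there: the inequality $s_\alpha\le [G:H]$ clashes immediately with the hypothesis $[G:H]<\chi(e)^2=s_\alpha$, with no need to extract linearity of $\chi$ as an intermediate step.
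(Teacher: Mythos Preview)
Your proposal is correct and follows essentially the same route as the paper: compute $s_\alpha=\chi(e)^2$ from $G_\alpha=\{e\}$, reduce $\langle e^*_{\alpha g_i},e^*_{\alpha g_j}\rangle$ to $\frac{\chi(e)}{|G|}\chi(g_jg_i^{-1})$, and use $H\subseteq G\setminus Z_\chi$ to force the cosets $Hg_1,\dots,Hg_{s_\alpha}$ to be distinct, contradicting $[G:H]<\chi(e)^2$. Your write-up is in fact slightly more explicit than the paper's about why an o*-basis of $V_\chi(G)$ restricts to one of $V^*_\alpha$.
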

\begin{proof} Since $G_\alpha=\{e\}$, $s_\alpha=\dim (V^*_\alpha(G))=\chi(e)^2$.  Suppose that $B=\{e^*_{\alpha g_1},\dots,e^*_{\alpha g_{s_\alpha}} \}$ is an o*-basis of $V^*_\alpha(G)$. We claim that $Hg_i\cap Hg_j=\emptyset$ if $i\neq j$.  Suppose for a contradiction that $h_1g_i=h_2g_j$, for some $1\leq i\neq j\leq s_\alpha$ and some $h_1, h_2\in H$.   Then, $g_jg_i^{-1}=h_2^{-1}h_1$ and thus
$$
  0 = \left<e^*_{\alpha g_i},e^*_{\alpha g_j} \right> = \left<e^*_{\alpha },e^*_{\alpha g_jg_i^{-1}} \right> =  \left<e^*_{\alpha },e^*_{\alpha h_2^{-1}h_1} \right>=\frac{\chi(e)}{|G|}\chi(h_2^{-1}h_1) \neq0,
$$
by (\ref{important2}), (\ref{innnerinv1}) and $h_2^{-1}h_1 \in H\subseteq G\setminus Z_\chi$.   Hence, $|G|\geq s_\alpha |H|$, which completes the proof.
\end{proof}
\section*{Acknowledgements}

Most parts of this paper were done while the first author was the visiting assistant professor in the Department of Mathematics and Statistics, Auburn University, AL, USA.  He would like to express his thanks to Prof. Tin-Yau Tam for the hospitality and his suggestion on the earlier draft.

\bigskip

\address \textbf{Kijti Rodtes} \\

{ Department of Mathematics, Faculty of Science, \\ Naresuan University, and Research Center for Academic Excellent in Mathematics \\ Phitsanulok 65000, Thailand}\\
\email{kijtir@nu.ac.th, \quad \quad and}\\

{ Department of Mathematics and Statistics, \\Auburn University, Alabama 36849, USA}\\
\email{kzr0033@auburn.edu}\\

\address \textbf{Kunlathida Chimla} \\

{ Department of Mathematics, Faculty of Science,\\ Naresuan University, Phitsanulok 65000, Thailand}\\
\email{kunlathida\_ nu@hotmail.com }\\


\begin{thebibliography}{20}
\bibitem{SKBer}
S. K. Berberian, \emph{Non-abelian groups of order pq}, Amer. Math. Monthly 60 (1953), pp. 37-40.
\bibitem{BPR}
C. Bessenrodt, M. R. Pournaki, and A. Reifegerste, \emph{A note on the orthogonal basis of a certain full symmetry class of tensors}, Linear Algebra Appl. 370 (2003), pp. 369--374.
\bibitem{Chi}
C. K. Li and A. Zaharia, \emph{Induced operators on symmetry classes of tensors}, Trans. Amer. Math. Soc. 354 (2002), pp. 807-836.
\bibitem{DP}
M. R. Darafsheh and  M. R. Pournaki, \emph{On the orthogonal basis of the symmetry classes of tensors associated with the dicyclic group}, Linear and Multilinear Algebra 47 (2000), pp. 137--149.
%\bibitem{DP2}
%M. R. Darafsheh, N. S. Poursalavati, \emph{On the existence of the orthogonal basis of the symmetry classes of tensors associated with certain groups}, SUT J. Math., Vol. 37, no. 1 (2001), pp. 1--17.
\bibitem{POSTADE}
P. Etingof, O. Golberg, S. Hensel. T. Liu, A. Schwendner, D. Vaintrob and E. Yudovina,  \emph{Introduction to representation theory} (MIT Open Courseware, 2011), http://math.mit.edu/~etingof/replect.pdf
%\bibitem{Geo}
%Geoffrey Exoo, \emph{Some applications of pq-groups in graph theory}, Discussiones Mathematicae Graph Theory 24(1): 109-114 (2004).
\bibitem{F2}
R. Freese, \emph{Inequalities for generalized matrix functions based on arbitrary characters}, Linear Algebra Appl. 7 (1973), pp. 337--345.
\bibitem{H}
R. R. Holmes, \emph{Orthogonal bases of symmetrized tensor spaces}, Linear and Multilinear Algebra 39 (1995), pp. 241--243.
\bibitem{Holmm}
R. R. Holmes, \emph{Orthogonality of cosets relative to irreducible character of finite groups}, Linear and Multilinear Algebra 52, (2004), pp. 133-143.
\bibitem{HT}
R. R. Holmes and T. Y. Tam, \emph{Symmetry classes of tensors associated with certain groups}, Linear and Multilinear Algebra 32 (1992), pp. 21--31.
\bibitem{HK}
R. R. Holmes and A. Kodithuwakku, \emph{Orthogonal bases of Brauer symmetry classes of tensors for the dihedral group}, Linear and Multilinear Algebra  61 (2013), pp. 1136--1147.
\bibitem{HORO}
M. Hormozi and K. Rodtes, \emph{ Symmetry classes of tensors associated with the semi-dihedral groups $SD_{8n}$},
Colloquium Mathematicum 131 (2013), 59--67.
%\bibitem{HORO2}
%M. Hormozi and K. Rodtes, \emph{ Orthogonal bases of Brauer symmetry classes of tensors for groups having cyclic support on non-linear Brauer characters},
%to appear.
\bibitem{Lam and Leung}
T. Y. Lam and K. H. Leung, \emph{On vanishing sums of roots of unity}, J. Algebra 224 (2000), pp. 91-109.
\bibitem{Me}
R. Merris,\emph{ Multilinear Algebra}, Gordan and Breach Science Publishers, Amsterdam, 1997.
%\bibitem{P}
%M. R. Pournaki, \emph{On the orthogonal basis of the symmetry classes of tensors associated with certain characters}, Linear Algebra Appl. 336 (2001), pp. 255--260.
\bibitem{kijtipq}
K. Rodtes, \emph{Symmetry classes of tensors associated to nonabelian groups of order pq}, Bull. Aust. Math. Soc. 94 (2016),  pp. 36-42.
\bibitem{SAJ}
M. A. Shahabi, K. Azizi and M. H. Jafari, \emph{On the orthogonal basis of symmetry classes of tensors}, J. Algebra 237 (2001), pp. 637--646.
\bibitem{shahryari}
M. Shahryari, \emph{On the orthogonal bases of symmetry classes}, J. Algebra 220 (1999), pp. 327-332.
%\bibitem{Shajareh}
%N. Shajareh Poursalavati, \emph{On the symmetry classes of tensors associated with certain frobenius
%      groups}, Pure and Applied Mathematics Journal, 2014; 3(1) pp. 7-10.
%\bibitem{Tam}
%T. Y. Tam, \emph{Multilinear Algebra}, (2011) http://www.auburn.edu/~tamtiny/Multilinear/\%20Algebra.pdf
\bibitem{Gong}
B. Y. Wang and M. P. Gong, \emph{A higher symmetry class of tensors with an orthogonal basis of
       decomposable symmetrized tensors}, Linear and Multilinear Algebra 30 (1991), pp. 61-64.
\bibitem{Wong}
W. J. Wong, \emph{On finite groups with semi-dihedral Sylow 2 subgroups}, J. Algebra 4 (1966), pp. 52-63.
\end{thebibliography}
\end{document}